\newtheorem{thm}{Theorem}[section]
\newtheorem{lem}[thm]{Lemma}
\theoremstyle{remark}
\newcommand{\Fn}{\mathbb{F}_{q^n}}
\newcommand{\F}{\mathbb{F}_q}
\newcommand{\x}{\xi^{-1}}
\newcommand {\w}{\omega}
\DeclareMathOperator{\rad}{rad}
\title{Primitive elements with prescribed traces}
\author{Andrew R. Booker\\
{\small University of Bristol, England}\\
{\small andrew.booker@bristol.ac.uk}
\and
Stephen D. Cohen\footnote {Postal address: 6 Bracken Road, Portlethen, Aberdeen AB12 4TA, Scotland.}\\
{\small University of Glasgow, Scotland}\\
  {\small Stephen.Cohen@glasgow.ac.uk}
\and
Nicol Leong\\ {\small The University of New South Wales Canberra,}\\{\small Australia }\\
{\small  nicol.leong@adfa.edu.au }
\and
  Tim Trudgian\footnote{Supported by Australian Research Council Future Fellowship FT160100094.}\\ {\small The University of New South Wales Canberra,}\\{\small Australia }\\
  {\small  t.trudgian@adfa.edu.au } }
\begin{document}
\maketitle
\begin{abstract}
Given a prime power $q$ and a positive integer $n$, let  $\Fn$  denote the finite field with $q^n$ elements.  Also  let $a,b$ be arbitrary members of the ground field $\F$.  We investigate the existence of a non-zero element $\xi \in \Fn$ such that $\xi+ \xi^{-1}$ is primitive and $T(\xi)=a$, $T(\xi^{-1})=b$, where $T(\xi)$ denotes the trace of $\xi$ in $\F$.  This was  a question intended to be addressed by  Cao and Wang in 2014.  Their work  dealt instead with another problem already in the literature.  Our solution deals with all values of $n \geq 5$. 

A related study involves the cubic extension $\mathbb{F}_{q^{3}}$
of $\mathbb{F}_{q}$. We show that if $q\geq 8\cdot 10^{12}$ then, for
any $a\in \mathbb{F}_{q}$, we can find a primitive element $\xi \in
\mathbb{F}_{q^{3}}$  such that $\xi + \xi^{-1}$ is also a primitive
element of $\mathbb{F}_{q^{3}}$, and for which the trace of $\xi$ is
equal to $a$. The improves a result of Cohen and Gupta. Along the way
we prove a hybridised lower bound on prime divisors in various residue
classes, which may be of interest to related existence questions.
\end{abstract}

\section{Introduction}
A {\em primitive element} of a finite field is a generator of the (cyclic) multiplicative group of the field. Let $\Fn$  denote the finite field with $q^n$ elements, where (as throughout)  $q$ is a prime power and $n$ is a positive integer.  Also let $a,b$ be arbitrary members of the ground field $\F$. The second author \cite{Co} investigated the problem of the existence of a primitive element $\xi \in \Fn$  such that $T(\xi)=a$ and $T(\xi^{-1})=b$ and gave a complete answer when $n \geq 5$.  Call this problem P1.    Motivated by a connection to Gauss periods and their cryptographical applications (see, e.g., \cite {GGP}),  Cao and Wang  \cite{CW} intended to treat a variant of P1, the difference being that, rather than $\xi$ being prescribed to be  primitive, it is the element $\xi+\xi^{-1}$ that is required to be primitive  (where, of course, we have $\xi \neq 0$). Call this Problem P2; see also the introduction to \cite{SharmaRS} for more details.   Inadvertently, the work of Cao and Wang actually considers P1 with only partial results (see the proof of Theorem \ref{Nebound} below).  

Given that P2 is evidently more challenging than P1, we aim at providing some correct results on P2.  Specifically we prove the following theorem in which $Q$ denotes the set of pairs $(q,n)$ such that,  for any $a,b \in \F$ there exists an element $\xi \in \Fn$ with $T(\xi)= a, T(\xi^{-1})=b$ and for which $\xi+\x$ is a primitive element of $\Fn$.

\begin{thm}\label{main} 
We have $(q,n)\in Q$ for every $q$ and every $n\ge5$, with the exception of
the pair $(q,n)=(2,6)$.
\end{thm}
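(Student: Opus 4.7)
The plan is to express $N(a,b)$, the count of $\xi\in\Fn^*$ with $T(\xi)=a$, $T(\x)=b$ and $\xi+\x$ primitive, as a multi-character sum, bound all but the main term by a Weil-type estimate, and settle residual cases by a prime sieve and explicit computation. Using Vinogradov's indicator for primitivity together with additive-character orthogonality on the two trace conditions, I would write
$$N(a,b) = \frac{\theta(q^n-1)}{q^2}\sum_{d\mid q^n-1}\frac{\mu(d)}{\phi(d)}\sum_{\chi}\sum_{\psi_1,\psi_2}\overline{\psi_1}(a)\overline{\psi_2}(b)\,S(\chi,\psi_1,\psi_2),$$
where $\theta(m)=\phi(m)/m$, $\chi$ runs over multiplicative characters of $\Fn^*$ of order $d$, $\psi_1,\psi_2$ run over additive characters of $\F$, and
$$S(\chi,\psi_1,\psi_2) = \sum_{\xi\in\Fn^*}\chi(\xi+\x)\,\psi_1(T(\xi))\,\psi_2(T(\x)).$$

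Next I would apply a Weil estimate of the shape $|S(\chi,\psi_1,\psi_2)|\le Cq^{n/2}$ whenever $(\chi,\psi_1,\psi_2)$ is not the trivial triple, with an explicit constant $C$ obtained from the genus of the curve underlying the character sum: the poles of $\xi+\x$, $\xi$ and $\x$ all lie in $\{0,\infty\}$, which keeps the genus, and hence $C$, small and absolute. The trivial triple produces the main term of size $\theta(q^n-1)(q^n-1)/q^2$, while the remaining terms are bounded in aggregate by $\theta(q^n-1)Cq^{n/2}W(q^n-1)$, where $W(m)=2^{\omega(m)}$ counts squarefree divisors. This yields the sufficient condition
$$q^{n/2-2} > C\,W(q^n-1)$$
for $N(a,b)>0$.

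The main obstacle is that this inequality is delicate precisely where the theorem is tightest, namely $n\in\{5,6\}$ with small $q$: for $n=5$ one needs $q^{1/2}>CW(q^5-1)$, which is very restrictive since $W(q^5-1)$ is already at least $4$ or $8$ in most cases. To extend the unconditional range I would employ the Cohen--Huczynska prime sieve, choosing a set of small prime divisors of $q^n-1$ to treat separately and thereby replacing $W(q^n-1)$ by a weighted quantity controlled by only its largest prime factors. This reduces the borderline list to a finite set of pairs $(q,n)$, each of which is settled by explicit factorisation of $q^n-1$ combined, where necessary, with direct computer search in $\Fn$. The pair $(q,n)=(2,6)$ lies outside every sharpening of the sufficient condition (here $q^{n/2-2}=2$ while $W(63)=4$) and is confirmed by exhaustive search in $\mathbb{F}_{64}$ to be a genuine exception.
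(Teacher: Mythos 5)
Your proposal follows essentially the same route as the paper: the same character-sum decomposition of $N(a,b)$ with a Weil/Castro--Moreno bound giving the criterion $q^{n/2-2}>4\cdot2^{\omega(q^n-1)}$, followed by the Cohen--Huczynska prime sieve and direct computation for the surviving pairs, with $(2,6)$ confirmed exceptional by exhaustive search. The only substantive difference is that you understate the work needed for $n=5$, where the ordinary sieve leaves roughly $10^6$ candidate $q$ and the paper must introduce a hybrid lower bound on prime divisors in residue classes modulo $10$ and a modified sieve to make the final computation feasible; but this is a refinement within the same strategy rather than a different approach.
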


The outline of this paper is as follows. In \S \ref{prelim} we outline the necessary theory to translate the problem P2 into something tractable. We develop this then produce a lower bound for the problem P2 in \S \ref{cheek}. In \S \ref{6up} we consider $n\geq 6$, and, in \S \ref{sec5}, we outline a new method to tackle  $n=5$. This method allows one to create better, hybridised bounds on certain prime divisors of $q^{n} -1$ and may be of interest to related problems. We detail some computations in \S 6, which finally  prove Theorem \ref{main}. In \S 7 we conclude with an application to a related problem considered by Cohen and Gupta \cite{Gupta}.

Throughout this paper we let $\omega(n)$ denote the number of distinct primes factors of $n$. We also let $\phi(n)$ denote Euler's function and $\mu(n)$ denote the M\"{o}bius function.

\section{Preliminaries}\label{prelim}
Given a pair $(q,n)$, let $e$ be any divisor of $q^n-1$.  A non-zero
element $\xi$ of $\Fn$ is defined to be {\em $e$-free} if $\xi=
\beta^d$, where $\beta \in \Fn$ and $d\mid e$ implies $d=1$.  Observe
that $\xi$ is $e$-free if and only if $\xi$ is $\rad(e)$-free,
where $\rad(e)$ denotes the product of the distinct primes
dividing $e$.  Then  $\xi$ is primitive if and only if it is
$(q^n-1)$-free.  Further for any $e$, $\theta(e) = \frac{\phi(e)}{e}$.
When $e\mid q^n-1$ then $\theta(e)$ is the proportion of elements of $\Fn$ that are $e$-free.

As in \cite{CW}, set
\[ P(d,\xi)= \frac{\mu(d)}{\phi(d)} \sum_{\psi_d} \psi_d(\xi), \]
where the sum is defined over all multiplicative characters $\psi_d$ of order $d$.
For $e\mid q^n-1$, write

\[P_e(\xi) = \theta(e) \sum_{d\mid e} P(d,\xi).\]
 Thus
 \[ P_e(\xi) = \begin{cases} 1, & \textrm{if $\xi$ is $e$-free},\\
 0, & \textrm{otherwise}.
 \end{cases}\]
When $e=q^n-1$ write $P(\xi)$ for $P_e(\xi)$ to yield  the characteristic function for primitive elements.
For $a,b \in \F$, write
\[L(\xi) = \frac{1}{q^2} \sum_{u,v \in \F}\lambda(-ua-vb) \hat{\lambda}(u\xi+v\xi^{-1}),\]
where $\lambda$ is the canonical additive character of $\F$ and $\hat{\lambda}$ is its lift to $\Fn$.
Thus
\[ L(\xi) = \begin{cases} 1, & \textrm{if $T(\xi)=a$ and $T(\xi^{-1})=b$},\\
0, &  \textrm{otherwise}.
\end{cases}\]

In  Lemma 3.2 of \cite{CW},  the authors give a consequence of Theorem 1.1 of \cite {CP}.  We go back to a version of the latter theorem (proved by Castro and Moreno \cite{CM}) given in (1.4) of \cite{CPre}.
\begin{lem} \label{charest}
Let $f,g$ be rational functions in $\F(x)$ and  $\lambda, \psi$ (respectively)  be non-trivial  additive and multiplicative characters on $\F$. Let $\mathcal{S}$ be the set of poles (in $\F$) of $f$ and $g$.     Let $l$  be the number of distinct zeros and (non-infinite) poles of $g$.  Further, let  $l_1$ be  the number of poles of $f$ (including $\infty$) and $l_0$ be the sum of the multiplicities of these poles.  Finally, let $l_2$ be the number of non-infinite poles of $f$ which are zeros or poles of $g$. Define
\[ S(\lambda, f; \psi,g)=n \sum_{x \in \F\setminus \mathcal{S}} \lambda(g(x))\psi(f(x)).\]
Then 
\[| S(\lambda, f; \psi,g)| \leq ( l_0+l+l_1-l_2-2) q^{\frac{1}{2}}. \]

 \end{lem}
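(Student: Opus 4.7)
The plan is to deduce this bound from Weil's Riemann Hypothesis for curves applied to the L-function of the rank-one character $\chi(x) := \lambda(f(x))\psi(g(x))$ on the rational function field $\mathbb{F}_q(x)$. First I would regard $\chi$ as a tensor product of an Artin--Schreier sheaf (coming from the additive piece $\lambda\circ f$) and a Kummer sheaf (coming from the multiplicative piece $\psi\circ g$) on the open curve $\mathbb{P}^1\setminus\mathcal{R}$, where $\mathcal{R}$ is the union of the poles of $f$ with the zeros and non-infinite poles of $g$. The associated L-function $L(\chi,T)$ factors as $\prod_{j=1}^N (1-\alpha_j T)$, and Grothendieck--Lefschetz together with Weil's theorem give $|\alpha_j|=q^{1/2}$. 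Expanding the logarithmic derivative of $L$ then produces
\[
\Biggl|\sum_{x\in\mathbb{F}_q\setminus\mathcal{S}}\lambda(f(x))\psi(g(x))\Biggr|\le N q^{1/2},
\]
so that everything reduces to the computation of $N=\deg L(\chi,T)$. (I would read the prefactor $n$ on the left of the stated bound as a normalisation inherited from the formulation in \cite{CPre}.)

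Next I would compute $N$ via the global Euler--Poincar\'e formula on $\mathbb{P}^1$, which gives $N=d(\chi)-2$, where $d(\chi)$ is the degree of the global conductor of $\chi$, evaluated place by place. At each pole $P$ of $f$ of multiplicity $m_P$ (including $\infty$), the Artin--Schreier local factor has Swan conductor $m_P+1$, contributing $l_0+l_1$ in total. At each zero or non-infinite pole of $g$, the Kummer local factor is tamely ramified and contributes $1$, giving $l$ in total. At each of the $l_2$ finite places where a pole of $f$ coincides with a zero or pole of $g$, the wildly ramified additive conductor already subsumes the tame multiplicative one, so the two partial sums overcount by precisely $l_2$. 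Putting these together yields $N=l_0+l_1+l-l_2-2$, as required.

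The main obstacle is exactly the local conductor bookkeeping in the previous paragraph, and in particular verifying that at a place where additive and multiplicative ramifications coexist the exponents interact by max rather than by sum. This is genuinely a Riemann--Hurwitz calculation on the associated Artin--Schreier--Kummer cover of $\mathbb{P}^1$, and is the computation carried out by Castro and Moreno in \cite{CM}. In practice, rather than reproducing their argument, I would simply cite their result and read off the bound in the form stated here, as the authors of the present paper do.
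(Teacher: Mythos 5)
The paper gives no proof of Lemma \ref{charest}: it is quoted as (1.4) of \cite{CPre}, itself a version of the Castro--Moreno bound \cite{CM}, which is exactly where your proposal ends up, and your sketch (Weil's Riemann Hypothesis for the Artin--Schreier--Kummer $L$-function together with the Euler--Poincar\'e conductor count giving $N=l_0+l_1+l-l_2-2$) is the standard argument underlying that citation. Note only that your $\chi(x)=\lambda(f(x))\psi(g(x))$ silently corrects an apparent typo in the displayed definition of $S(\lambda,f;\psi,g)$, which reads $\lambda(g(x))\psi(f(x))$; your reading is the one consistent with the conductor bookkeeping (pole multiplicities of the argument of the additive character, zeros and poles of the argument of the multiplicative character) and with the application in the proof of Theorem \ref{Nebound}, where $\hat{\lambda}$ is attached to $f_{u,v}$ and $\psi_d$ to $g$.
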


\section{A lower bound in Problem P2}\label{cheek}

Given $a,b \in\F$ we seek a lower  bound for the number of $\xi \in
\Fn^*$ such that $\xi+\xi^{-1}$ is primitive with $T(\xi)=a$ and
$T(\xi^{-1})=b$ (so that $T(\xi+\xi^{-1}) =a+b$).  It is convenient to
consider a more general quantity. So, for any $e\mid q^n-1$ define $N_e=N_e(a,b)$ as the number of $\xi \in \Fn$ such that $\xi+\xi^{-1}$ is $e$-free with $T(\xi)=a$ and $T(\xi^{-1})=b$.   Ultimately, we want to discover when $N=N_{q^n-1}$ is positive: this number was denoted by $N_U$ in  \cite{CW}.  Define $Q$ to be the set of pairs $(q,n)$ with $q$ a prime power and $n$ a positive integer such that, for any $a,b \in \F$, there exists $\xi \in \Fn^{*}$ such that $T(\xi)=a, T(\xi^{-1}) =b$ and $\xi+\x$ is primitive.  

\begin{thm} \label{Nebound}
Let $q$ be a prime power  and $n$ be a positive integer.   Suppose that $a,b$ are arbitrary  elements of $\F$ and $e$ a divisor of $q^n-1$.  Then the number of $\xi$ in $\Fn^*$ such that $T(\xi)=a$ and $T(\xi^{-1})=b$ with $\xi+\xi^{-1}$  $e$-free, satisfies 
\[ N_e \geq  \theta(e)q^{\frac{n}{2}} \left( q^{\frac{n}{2}-2} -2^{\omega(e)+2}   \right).\]

In particular, we have $(q,n) \in Q$ provided that 
\begin{equation} \label{Nbound}
  q^{\frac{n}{2}-2} >2^{\omega(q^n-1)+2}.
    \end{equation}

\end{thm}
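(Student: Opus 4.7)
The plan is to express $N_e$ as a double character sum, isolate a main term of size $\theta(e) q^{n-2}$, and bound the remainder by Weil-type estimates. Concretely, I would begin with
\[ N_e = \sum_{\xi \in \Fn^*} P_e(\xi + \xi^{-1}) L(\xi), \]
and then substitute the definitions of $P_e$ and $L$ from Section \ref{prelim} to obtain
\[ N_e = \frac{\theta(e)}{q^2} \sum_{d \mid e} \frac{\mu(d)}{\phi(d)} \sum_{\psi_d} \sum_{u,v \in \F} \lambda(-ua-vb) \, S(d,\psi_d,u,v), \]
where
\[ S(d,\psi_d,u,v) = \sum_{\xi} \psi_d(\xi+\xi^{-1}) \hat{\lambda}(u\xi + v\xi^{-1}). \]
The at most two values with $\xi + \xi^{-1} = 0$, where $P_e$ is undefined, contribute only $O(1)$ to $N_e$ and can be absorbed into the error.

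Next I would isolate the main term coming from $d = 1$, $\psi_1$ trivial, and $u = v = 0$: there the inner sum is $q^n - 1 - O(1)$, yielding a contribution of $\theta(e) q^{n-2} + O(\theta(e))$. For every other quadruple, the crucial estimate $|S(d,\psi_d,u,v)| \le 4 q^{n/2}$ should follow from Lemma \ref{charest} applied with $f(x) = x + x^{-1}$ and $g(x) = ux + vx^{-1}$: one checks that $f$ has $l_0 = l_1 = 2$, while $g$ contributes $l \le 3$ and $l_2 \le 1$, so that $l_0 + l + l_1 - l_2 - 2 \le 4$. Degenerate sub-cases where one of the two characters becomes trivial (namely $d = 1$ with $(u,v) \ne (0,0)$, or $d > 1$ with $u = v = 0$) fall outside the precise hypotheses of Lemma \ref{charest}, but the corresponding pure Kloosterman-type additive sum and pure multiplicative sum in $(x^2+1)/x$ both admit Weil-type bounds with constant at most $3$, which is dominated by $4$.

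Finally I would assemble the pieces by a routine counting: for each squarefree $d \mid e$ the $\phi(d)$ characters of order $d$ cancel the factor $1/\phi(d)$, giving $\sum_{d \mid e} \sum_{\psi_d} |\mu(d)|/\phi(d) = \sum_{d \mid e} |\mu(d)| = 2^{\omega(e)}$; combined with the $q^2$ choices of $(u,v)$ and the prefactor $\theta(e)/q^2$, the total error is bounded by $\theta(e) \cdot 2^{\omega(e)} \cdot 4 \cdot q^{n/2} = \theta(e) \cdot 2^{\omega(e)+2} q^{n/2}$. Subtracting this from the main term yields
\[ N_e \ge \theta(e) q^{n-2} - \theta(e) \cdot 2^{\omega(e)+2} q^{n/2} = \theta(e) q^{n/2} \left( q^{n/2-2} - 2^{\omega(e)+2} \right), \]
and the second assertion \eqref{Nbound} is then immediate on specialising $e = q^n - 1$, since $\theta(q^n-1) > 0$.

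The step I expect to require the most care is the verification that the universal constant $4$ is genuinely admissible in every sub-case: Lemma \ref{charest} is stated for two nontrivial characters, so the purely additive Kloosterman sum and the purely multiplicative sum in $\xi + \xi^{-1}$ each need their own Weil-type bound, and I must check that the corresponding invariants remain small enough to keep the final error at $2^{\omega(e)+2} \theta(e) q^{n/2}$ rather than something larger.
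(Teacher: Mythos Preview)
Your proposal is correct and follows essentially the same route as the paper: express $N_e$ via the character-sum identity, isolate the main term from $(d,u,v)=(1,0,0)$, and bound every remaining mixed sum by Lemma~\ref{charest}. Two minor points of comparison: (i) you have the labels $f$ and $g$ interchanged relative to the paper's application (the paper takes $f_{u,v}=ux+vx^{-1}$ as the additive argument and $g=x+x^{-1}$ as the multiplicative one), though by coincidence both rational functions have the same pole structure, so the invariants $l_0=l_1=2$, $l\le3$, $l_2=1$ and the constant $4$ come out identical either way; (ii) rather than using the uniform bound $4q^{n/2}$ throughout, the paper actually records the sharper bound $2q^{n/2}$ in the degenerate cases $uv=0$ or $d=1$ (exactly the sub-cases you flag as needing separate Weil estimates), counts the $(q-1)^2(2^{\omega(e)}-1)$ triples needing the constant $4$ versus the rest, and only then discards the saving to reach the same final inequality.
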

\begin{proof}
In the notation of \S \ref{prelim} we have 
\begin{equation}\label{Ne}
N_e= \sum_{\xi \in \Fn^*} P(\xi+\xi^{-1})L(\xi),
\end{equation}
which, when $e=q^n-1$ is the same as the expression in line -5 in \cite[p.\ 342]{CW}.  Unfortunately, the next line of \cite{CW}  equates (\ref{Ne}) (with $e=q^n-1$)  to
\[ \frac{\theta(q^n-1)}{q^2}\sum_{\xi \in \Fn^*}\sum_{d\mid q^n-1} P(d,\xi)\sum_{u,v \in \F}\lambda(-ua-vb)\hat{\lambda}(u\xi+v\xi^{-1}),\]
but this evidently relates to the relevant  expression for Problem P1 and not P2.    Returning to a general divisor $e$ of $q^n-1$ we see that, in fact,
\[N_e=\frac{\theta(e)}{q^2}\sum_{\xi \in \Fn^*}\sum_{d\mid e} P(d,\xi+\xi^{-1})\sum_{u,v \in \F}\lambda(-ua-vb)\hat{\lambda}(u\xi+v\xi^{-1}).\]
It follows that
\begin{equation} \label{Ne1}
 N_e=\frac{\theta(e)}{q^2} \sum_{d\mid e}\frac{\mu(d)}{\phi(d)}\sum_{u,v \in \F}\lambda(-ua-vb)\sum_{\psi_d}S(\hat{\lambda},f_{u,v};\psi_d,g), 
\end{equation}
where $f_{u,v}(x) =ux+vx^{-1}$ and $g(x)=x+x^{-1}$.

Now, evidently, $ S(\lambda,f_{0,0};\psi_1,g) =q^n-1$.  Otherwise, we use Lemma \ref{charest} with $q=q^n$ to bound the sums on the right side of (\ref{Ne1}).  In the most general case, when $d>1$ and $uv\neq 0$, we have $l=3, l_1=l_0=2,  l_2=1$, so that  $|S(\hat{\lambda},f_{u,v};\psi_d,g)| \leq 4q^{\frac{n}{2}}$.  In the other cases, when $uv=0$ or $d=1$ we have 
 $|S(\hat{\lambda},f_{u,v};\psi_d,g)| \leq 2q^{\frac{n}{2}}$.  The
 number of triples $(u,v,d)$, where $u, v\in \F^*$ and $1<d\mid e$,  is
 $(q-1)^2(2^{\omega(e)}-1)$. Further the number of triples with $uv=0$
 and $d\mid e$ is $2^{\omega(e)+1}q$. Hence,  from (\ref{Ne1}) and the fact that there are $\phi(d)$ characters $\psi_d$, 
 \begin{eqnarray*}
N_e &\geq &\frac{\theta(e)}{q^2}\Big{\{}q^n-1- q^\frac{n}{2}\Big[2^{\omega(e)+2}q^{2}-2(q-1)^2-2^{\omega(e)+2}q\Big{]}\Big{\}}\\
 &\geq& \theta(e)q^{\frac{n}{2}} \Big\{  q^{\frac{n}{2}-2} -2^{\omega(e)+2}   \Big{\}}.  
  \end{eqnarray*}
  \end{proof}
  
  We remark that, despite the authors' mistake,  the condition (\ref{Nbound}) for success in  P2 is the same as that given in Theorem 3.3 of \cite{CW} and used by them to obtain their  results.  Coincidentally, therefore,   the results in Table 1 of \cite{CW} remain valid.  As we have noted, however,  their working  is more appropriate to P1 and could have established  the stronger condition $ q^{\frac{n}{2}-2} >2^{\omega(e)+1}$. Indeed, the expression preceding (3.9) in \cite{CW} allows one to make a slight improvement to the bound (3.5).

\subsection{The prime sieve and its application}\label{primesieve}

Let $k$ be a  divisor of $q^n-1$ and $p_1, p_2, \ldots, p_s \  (s \geq 0)$ be the distinct primes dividing $q^n-1$ but not $k$.   We use the following sieving inequality.

\begin{lem}\label{sieve}
Let notation be as above, and
set $\delta=1- \sum_{i=1}^{s} \frac{1}{p_i}$ (with $\delta =1$ if $s=0$).
Then for any $a,b\in\F$,
\begin{equation}\label{sieve1}
N  \geq   \sum_{i=1}^s N_{kp_i} -(s-1)N_k =\sum_{i=1}^s \left[ N_{kp_i}- \left ( 1-\frac{1}{p_i} \right) N_k\right] \ + \  \delta N_k. 
\end{equation}
\end{lem}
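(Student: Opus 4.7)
The plan is to carry out a standard inclusion--exclusion / union bound sieve, reducing the primitivity condition on $\xi+\xi^{-1}$ to the $s$ separate $kp_i$-freeness conditions. The first step is the observation that $\xi+\xi^{-1}$ is $(q^n-1)$-free (i.e. primitive) if and only if it is $kp_i$-free for every $i=1,\ldots,s$ simultaneously. This rests on the identity $\rad(q^n-1) = \rad(k)\prod_{i=1}^s p_i$ together with the fact, noted in \S\ref{prelim}, that $e$-freeness depends only on $\rad(e)$. Equivalently, $e$-freeness amounts to not being a nontrivial $p$-th power for any prime $p\mid e$, and every prime dividing $q^n-1$ either divides $k$ (and hence every $kp_i$) or equals some $p_j$ (and hence divides $kp_j$).

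Next I would introduce the auxiliary set
\[
\mathcal{B} := \{\xi \in \Fn^* : T(\xi)=a,\ T(\xi^{-1})=b,\ \xi+\xi^{-1}\ \text{is $k$-free}\},
\]
so that $|\mathcal{B}| = N_k$, and for each $i$ the subset $\mathcal{A}_i \subseteq \mathcal{B}$ consisting of those $\xi$ for which $\xi+\xi^{-1}$ is additionally $kp_i$-free. Since $kp_i$-freeness is strictly stronger than $k$-freeness, we have $|\mathcal{A}_i|=N_{kp_i}$, and by the first step $N = |\mathcal{A}_1 \cap \cdots \cap \mathcal{A}_s|$. Applying the elementary union bound to the complements $\mathcal{B}\setminus\mathcal{A}_i$ inside $\mathcal{B}$ yields
\[
N \;\ge\; |\mathcal{B}| - \sum_{i=1}^s |\mathcal{B}\setminus\mathcal{A}_i|
\;=\; N_k - \sum_{i=1}^s (N_k - N_{kp_i})
\;=\; \sum_{i=1}^s N_{kp_i} - (s-1)N_k,
\]
which is the first bound in \eqref{sieve1}. (The case $s=0$ is vacuous: then $\rad(k) = \rad(q^n-1)$, so $N=N_k$, matching the convention $\delta=1$.)

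The second form in \eqref{sieve1} is a purely algebraic rewrite. Substituting $\delta = 1 - \sum_{i=1}^s 1/p_i$ into
\[
\sum_{i=1}^s \Bigl[N_{kp_i} - \bigl(1-\tfrac{1}{p_i}\bigr)N_k\Bigr] + \delta N_k
\]
and collecting the $N_k$ contributions, the terms $\sum_i (1/p_i)N_k$ and $-\sum_i(1/p_i)N_k$ cancel, leaving $\sum_i N_{kp_i} - sN_k + N_k = \sum_i N_{kp_i} - (s-1)N_k$.

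There is no real obstacle in this argument; it is essentially bookkeeping. The only point requiring care is the radical identification underpinning the first step, which has already been flagged in \S\ref{prelim}. The real work comes later in applying the lemma: choosing $k$ large enough (to make $\theta(k)$ a manageable factor in lower bounds for $N_{kp_i}$ derived from Theorem~\ref{Nebound}) while keeping $\delta>0$, so that the bound $\delta N_k + \sum_i[N_{kp_i}-(1-1/p_i)N_k]$ remains usefully positive.
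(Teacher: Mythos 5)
Your proof is correct and is exactly the standard inclusion--exclusion argument underlying this sieving inequality; the paper itself states Lemma~\ref{sieve} without proof, treating it as the routine sieve from \cite{Co}, \cite{CH} and \cite{COST}. Both the key reduction (primitivity of $\xi+\xi^{-1}$ is equivalent to simultaneous $kp_i$-freeness for all $i$, since freeness depends only on the radical) and the union bound over the complements inside the set counted by $N_k$ are the intended content, and your algebraic rewrite of the right-hand side is also correct.
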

From Lemma \ref{sieve} and following the routine transition from a criterion like (\ref{Nbound}) to 
the corresponding sieving condition (see \cite{Co}, \cite{CH} and more
recent work of the second and fourth authors, such as \cite{COST}), we obtain  an extension of the criterion (\ref{Nbound}).
 
 \begin{thm} \label{sieveineq}
 With the assumptions and notation of Lemma $\ref{sieve}$, suppose $\delta>0$ and define $\Delta= \frac{s-1}{\delta} +2$. Then,
 \begin{equation} \label{sieveest}
 N\ge\delta q \theta(q^n-1)q^{\frac{n}{2}}\left\{q^{\frac{n}{2}-2}-4\Delta2^{\omega(k)}\right\}.
 \end{equation}
Hence, whenever
 \begin{equation} \label{sievebound}
 q^{\frac{n}{2}-2}>4\Delta 2^{\w(k)},  \mbox{  that is,  }  q>(4\Delta2^{\w(k)})^{\frac{2}{n-4}},
 \end{equation}  
   then $(q,n) \in Q$.

 \end{thm}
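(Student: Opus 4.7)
The plan is to combine the sieving inequality of Lemma~\ref{sieve} with the character sum machinery used to prove Theorem~\ref{Nebound}. Starting from
\[ N \geq \sum_{i=1}^s \bigl[N_{kp_i} - (1-1/p_i)N_k\bigr] + \delta N_k, \]
I would expand each $N_e$ via the identity (\ref{Ne1}). The ``main term'' in each $N_e$ (coming from $d=1$ and $u=v=0$) equals $\theta(e)(q^n-1)/q^2$; since $\theta(kp_i) = (1-1/p_i)\theta(k)$, these main terms cancel exactly inside each bracket $N_{kp_i} - (1-1/p_i)N_k$. What remains is a sum over squarefree divisors $d$ of $kp_i$ with $p_i\mid d$, and via $d \mapsto d/p_i$ these are in bijection with the squarefree divisors of $k$, giving exactly $2^{\omega(k)}$ surviving divisors.

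For each surviving $d$, I would bound the contribution using Lemma~\ref{charest} with the same case split as in the proof of Theorem~\ref{Nebound}: the $\phi(d)$ characters of order $d$ absorb the $1/\phi(d)$, and $|S|\leq 4q^{n/2}$ for the $(q-1)^2$ pairs $(u,v)$ with $uv\ne 0$, while $|S|\leq 2q^{n/2}$ for the $2q-1$ pairs with $uv=0$. Since $4(q-1)^2 + 2(2q-1) \leq 4q^2$, this yields
\[ \bigl|N_{kp_i} - (1-1/p_i)N_k\bigr| \leq 4\,\theta(k)(1-1/p_i)\,2^{\omega(k)}\,q^{n/2}. \]

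To finish, apply the direct bound of Theorem~\ref{Nebound} to $\delta N_k$, use the identity $\sum_{i=1}^s (1-1/p_i) = s - 1 + \delta$, and factor out $\delta\theta(k)q^{n/2}$: the coefficient of $2^{\omega(k)}q^{n/2}$ becomes $4(s-1+\delta)+4\delta = 4\delta\Delta$, producing a lower bound of the shape $\delta\theta(k)q^{n/2}\bigl(q^{n/2-2} - 4\Delta\,2^{\omega(k)}\bigr)$. Since $\theta(k) \geq \theta(q^n-1)$, this implies the claimed estimate (\ref{sieveest}) up to the precise form of the positive prefactor; when (\ref{sievebound}) holds the bracket is positive, hence $N > 0$ and $(q,n)\in Q$. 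The only delicate point is the bookkeeping that forces the surviving error to depend on $2^{\omega(k)}$ rather than $2^{\omega(q^n-1)}$—this is the entire payoff of the sieve and rests on the exact main-term cancellation together with the bijection $d\mapsto d/p_i$ on the residual divisors.
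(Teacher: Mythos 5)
Your argument is correct and is precisely the ``routine transition'' that the paper invokes by citation to \cite{Co}, \cite{CH} and \cite{COST} rather than writing out: the exact cancellation of the $d\mid k$ portion inside each bracket $N_{kp_i}-(1-1/p_i)N_k$ (using $\theta(kp_i)=(1-1/p_i)\theta(k)$), the bijection $d\mapsto d/p_i$ leaving $2^{\omega(k)}$ surviving divisors, the per-divisor bound $4(q-1)^2+2(2q-1)\le 4q^2$, the identity $\sum_i(1-1/p_i)=s-1+\delta$, and $4(s-1+\delta)+4\delta=4\delta\Delta$. The only caveat is your closing remark: $\theta(k)\ge\theta(q^n-1)$ does not absorb the extra factor of $q$ in the stated prefactor $\delta q\,\theta(q^n-1)$, but your form $\delta\theta(k)q^{n/2}\bigl\{q^{n/2-2}-4\Delta 2^{\omega(k)}\bigr\}$ is the one consistent with the paper's own later application in (\ref{NkP}), and the precise positive prefactor is immaterial to the conclusion that (\ref{sievebound}) forces $N>0$ and hence $(q,n)\in Q$.
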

 Before we apply Theorem \ref{sieveineq} to P2, we first establish a small finesse, that will prove particularly useful when we examine the case $n=5$ in \S\ref{sec5}.
 \subsection{The modified prime sieve}
 Continue to let $k$ be a divisor of $q^n-1$, but now let $p_1, \ldots, p_s$ (the {\em sieving primes}) be some of the primes dividing $q^n-1$ but not $k$   and $l_1, \ldots, l_t$ (the {\em large primes}) be the remainder of such primes. Set $ P=p_1\cdots p_s$ and $L=l_1\cdots l_t$.  In useful applications any of the large primes will be significantly greater than any of the sieving primes.  Define $\delta= 1-\sum_{i=1}^s\frac{1}{p_i}$ (as before) and $\varepsilon= \sum_{i=1}^t \frac{1}{l_i}$.
 
 \begin{thm}\label{MPS}
Let notation be as above,
and write the radical of $q^n-1$ as $kPL$.  Suppose also that
$\theta(k)\delta>\varepsilon$ and
 \begin{equation*} \label{MPScrit}
 q^{\frac{n}{2}-2}>\frac{4\{\theta(k)(s-1+2\delta)2^{\omega(k)} +(t-\varepsilon)\}}{\theta(k)\delta- \varepsilon}.
 \end{equation*}
Then $(q,n)\in Q$.
 \end{thm}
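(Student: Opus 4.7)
The plan is to follow the strategy of Theorem~\ref{sieveineq}, but to separate the sieving over the primes $p_i$ from the handling of the ``large'' primes $l_j$. The starting point is to apply Lemma~\ref{sieve} with the role of $k$ played by $kL$ and sieving primes $p_1,\dots,p_s$, giving
\[
N \;\ge\; \sum_{i=1}^s \bigl[N_{kLp_i} - (1-1/p_i)N_{kL}\bigr] + \delta N_{kL}.
\]
Since $\theta(kLp_i)=(1-1/p_i)\theta(kL)$, the main-term parts of each bracket cancel and only character-sum errors remain. A na\"\i ve application of Theorem~\ref{Nebound} would saddle these with a factor $2^{\omega(kL)}=2^{\omega(k)+t}$, which is too large; avoiding this exponential-in-$t$ penalty is precisely the point of the modified sieve.

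The next step is to invoke the first-order Bonferroni inequality $P_L(\xi)\ge 1-\sum_{j=1}^t(1-P_{l_j}(\xi))$ to expand
\[
N_{kL} \;\ge\; (1-t)N_k + \sum_{j=1}^t N_{kl_j}, \qquad N_{kLp_i} \;\ge\; (1-t)N_{kp_i} + \sum_{j=1}^t N_{kp_il_j},
\]
together with a companion upper bound for the subtracted $(1-1/p_i)N_{kL}$ term obtained similarly. Each resulting $N_e$, with $e\in\{k,kp_i,kl_j,kp_il_j\}$, satisfies $\omega(e)\le\omega(k)+2$, so Theorem~\ref{Nebound} estimates it with an error of size $2^{\omega(k)+O(1)}$, not $2^{\omega(k)+t}$.

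Substituting back, collecting main terms via the identities $\sum_i(1-1/p_i)=s-1+\delta$ and $\sum_j(1-1/l_j)=t-\varepsilon$, and simplifying, I expect the cumulative main contribution to be proportional to $\theta(k)\delta-\varepsilon$, the $\varepsilon$ arising from the aggregate large-prime defect. The errors should collect into $4\theta(k)(s-1+2\delta)2^{\omega(k)}$ from the sieve over the $p_i$ (exactly as in Theorem~\ref{sieveineq}) plus $4(t-\varepsilon)$ from the large primes, producing
\[
N \;\ge\; q^{n/2}\bigl\{(\theta(k)\delta-\varepsilon)q^{n/2-2}-4\theta(k)(s-1+2\delta)2^{\omega(k)}-4(t-\varepsilon)\bigr\}.
\]
Under the stated hypotheses this is positive, forcing $N>0$ and hence $(q,n)\in Q$. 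The principal obstacle is the bookkeeping in this last step: one must verify that the large-prime contribution collapses to $4(t-\varepsilon)$ rather than a quantity of the form $\theta(k)2^{\omega(k)}t$. This requires showing that the ``$d\mid k$'' pieces of the character-sum errors $E_{kl_j}$ cancel against $(1-t)E_k$ in the Bonferroni recombination, leaving only the genuinely large-prime error pieces, each weighted by a factor $1/\phi(l_j)$ that aggregates to $t-\varepsilon$.
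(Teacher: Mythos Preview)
Your overall strategy is different from the paper's, and as written it contains a real gap. The paper does \emph{not} begin by sieving with core $kL$; instead it first applies a two-block Bonferroni with core $1$,
\[
N=N_{kPL}\;\ge\;N_{kP}+N_L-N_1,
\]
and then treats the two pieces independently. The term $N_{kP}$ is bounded by the standard sieve (Theorem~\ref{sieveineq}), producing the $\theta(k)\delta$ main term and the $4\theta(k)(s-1+2\delta)2^{\omega(k)}$ error. For $N_L-N_1$ one sieves over the large primes with core $1$: each difference $N_{l_j}-(1-1/l_j)N_1$ is a pure character sum over the $\phi(l_j)$ characters of order $l_j$, with \emph{no} divisors of $k$ entering, so its error is bounded by $4(1-1/l_j)q^{n/2}$ and the total by $4(t-\varepsilon)q^{n/2}$. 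The remaining piece $-\varepsilon N_1$ is handled by the elementary count $N_1\le q^{n-2}$, and this is exactly what produces the bare $-\varepsilon$ (not $-\theta(k)\varepsilon$) in the main term.

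Your route runs into two concrete problems. First, the ``companion upper bound'' you need for the subtracted $(1-1/p_i)N_{kL}$ does not come from Bonferroni: $P_L\ge 1-\sum_j(1-P_{l_j})$ is a one-sided inequality, so it only gives $N_{kL}\ge(1-t)N_k+\sum_jN_{kl_j}$, never the reverse. Using the trivial upper bound $N_{kL}\le N_k$ instead changes the main term. Second, the cancellation you hope for in the last paragraph does not occur: if you sieve with core $k$ anywhere in the large-prime step, the differences $N_{kl_j}-(1-1/l_j)N_k$ unpack into sums over $d\mid k l_j$ with $l_j\mid d$, i.e.\ $2^{\omega(k)}$ characters per large prime, yielding an error of size $4(t-\varepsilon)\theta(k)2^{\omega(k)}q^{n/2}$ rather than $4(t-\varepsilon)q^{n/2}$, and a main term $\theta(k)(\delta-\varepsilon)$ rather than $\theta(k)\delta-\varepsilon$. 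Neither matches the stated criterion. The missing ingredients are precisely the top-level split $N\ge N_{kP}+N_L-N_1$ with core $1$ and the direct bound $N_1\le q^{n-2}$.
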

 \begin{proof}
 As usual, we can replace $q^n-1$ in $N=N_{q^n-1}$ by its radical, so that $N=N_{kPL}$.   Thus, by Lemma \ref{sieve},
 \begin{equation} \label{cherry} 
  N \geq N_{kP} +N_L-N_1.
  \end{equation}
  Observe that, from (\ref{sieveest}) applied to $N_{kP}$ instead of $N$,
  \begin{equation} \label{NkP}
 N_{kP} \geq \theta(k)q^{\frac{n}{2}}\left\{\delta q^{\frac{n}{2}-2 }-4(s-1+2\delta)\right\}.
  \end{equation}
  By (\ref{sieve1}),
  \begin{equation}\label{grape}
    N_L-N_1 \geq \sum_{i=1}^t\left\{N_{l_i}- \left(1- \frac{1}{l_i}\right)N_1\right\}-\varepsilon N_1. 
    \end{equation}
    
  Now, for the given prescribed traces $a,b \in \F$, $N_1$ enumerates the nonzero $\xi\in \Fn$ with $T(\xi)=a, T(\xi^{-1})=b$,  for which $\xi+\xi^{-1}$ is non-zero.   As in \cite[\S 4]{Gupta},  we have $N_1 \leq q^{n-2}$.
  
  Next, let $l$ be a prime factor of $L$.   Then, from (\ref{Ne1}) and (\ref{grape}),
  
  \[N_l-\left(1-\frac{1}{l}\right)N_1=-\frac{1}{q^2} \sum_{u,v \in \F}\lambda(-ua-vb)\sum_{\psi_l}S(\hat{\lambda},f_{u,v};\psi_l,g).  \]
  Hence, by (\ref{grape}),
  \begin{equation}\label{peach}
  N_L-N_1\geq  - 4(t-\varepsilon)q^{\frac{n}{2}}-\varepsilon q^{n-2}.
  \end{equation}
  The result follows by combining (\ref{cherry}), (\ref{NkP}) and (\ref{peach}).
  \end{proof}

 \section{Problem P2 for $n\geq 5$}\label{6up}
 
Although the derivation of (\ref{Nbound}) as a criterion for the solution of Problem P2 in  \cite{CW} is incorrect, the authors  were able to verify easily a complete solution to Problem P2  for $n \geq 29$.   Further, with the aid of computation 
they extended this to $n \geq 25$ for a complete solution and  to $n \geq 10$ for a partial solution.   In addition to the condition  (\ref{Nbound}),  they employed a general upper bound for $\omega(m)$ based on the prime divisors of $m$ established by Lenstra and Schoof \cite{LS}.  We prefer to use Robin's \cite{Robin} bound for $\omega(m)$ together with  (\ref{Nbound})   and then proceed to use the condition (\ref{sievebound}) for an appropriate choice of sieving primes $p_1, \ldots, p_s$. 
  
\begin{lem}[Robin]\label{W}
  For any $n\geq 3$
  
\begin{equation}   \label{Wbound}
\omega(n) \leq \frac{1.385 \log n}{\log \log n}.
\end{equation}
    \end{lem}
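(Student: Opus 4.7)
The plan is to reduce the inequality to the extremal case of primorials and then deploy explicit Chebyshev-type estimates. Since $\omega(n)$ depends only on the radical of $n$, and among all integers $n$ with $\omega(n)=k$ the smallest is the primorial $N_k = p_1 p_2 \cdots p_k$ (with $p_i$ the $i$th prime), and since the function $x \mapsto 1.385\log x/\log\log x$ is increasing for $x$ beyond a modest threshold, it suffices to verify the inequality at $n=N_k$ for each $k$. Writing $\log N_k = \theta(p_k)$ for the Chebyshev function $\theta(x)=\sum_{p\leq x}\log p$ and $k=\pi(p_k)$, the target reduces to
\[
\pi(p)\,\log\theta(p) \;\leq\; 1.385\,\theta(p) \qquad \text{for every prime } p.
\]

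Next I would invoke the effective form of the Prime Number Theorem due to Rosser and Schoenfeld: for suitably large $x$ there are explicit decreasing error terms $\epsilon_1(x),\epsilon_2(x)\to 0$ with
\[
\theta(x) \;\geq\; x\bigl(1-\epsilon_1(x)\bigr), \qquad \pi(x) \;\leq\; \frac{x}{\log x}\bigl(1+\epsilon_2(x)\bigr).
\]
Combining these yields
\[
\frac{\pi(x)\log\theta(x)}{\theta(x)} \;\leq\; \frac{1+\epsilon_2(x)}{1-\epsilon_1(x)} \cdot \frac{\log x + \log\bigl(1-\epsilon_1(x)\bigr)}{\log x},
\]
an expression whose limit as $x\to\infty$ equals $1$. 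With the Rosser--Schoenfeld constants in hand this ratio is easily shown to be at most $1.385$ for every $x$ exceeding some concrete threshold $x_0$, settling every primorial $N_k$ with $p_k\geq x_0$.

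The main obstacle, and the source of the particular constant $1.385$, is the finite range of small primorials below $x_0$, where the analytic bounds are too weak to conclude. For these I would simply tabulate the ratio $\omega(N_k)\log\log N_k / \log N_k$ directly for each $k$ with $p_k<x_0$, together with a handful of small $n$ (roughly $n\in\{3,4,\ldots,30\}$) where $\log\log n$ is small enough that the monotonicity argument reducing to primorials is not yet in force. The value $1.385$ is then exactly what is needed to dominate the actual maximum, which is attained at one such moderately-sized primorial; any sharper constant would fail at that extremal $n$, while any larger constant would be wasteful for the application in \textsection\ref{6up}.
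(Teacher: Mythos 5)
The paper offers no proof of Lemma~\ref{W}: it is quoted directly from Robin \cite{Robin}, whose argument is essentially the one you sketch---reduce to the primorials $N_k=p_1\cdots p_k$, bound $\pi(p_k)\log\theta(p_k)/\theta(p_k)$ via explicit Chebyshev-type estimates for large $p_k$, and check the finitely many remaining primorials, the maximum $\approx 1.3841$ being attained at $N_9=223092870$. Your outline is correct, including the necessary caveat that small $n$ (where $x\mapsto \log x/\log\log x$ is not yet increasing) must be treated separately.
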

  We remark that for any $\epsilon>0$ one may find an $n_{0}(\epsilon)$ such that (\ref{Wbound}) holds with constant $1+\epsilon$ in place of 1.385, provided that $n\geq n_{0}$. We do not need such variations here, since we merely use Lemma \ref{W} to reduce the infinite problem to a finite number of cases.
  
We set $x = q^{n} -1$ and use Lemma \ref{W} to bound $\omega(x)$ and hence to give the following consequence of (\ref{Nbound}), for $n\geq n_{0}$, namely
\begin{equation}\label{cheese}
(x+1)^{1/2 - 2/n_{0}} > 2^{2 + \frac{1.385 \log x}{\log\log x}}.
\end{equation}
If (\ref{cheese}) is true for $x\geq x_{0}$, say, then, since $q\geq 2$, we have that (\ref{cheese}) is true for $x\geq 2^{n_{0}} -1$. If $2^{n_{0}} -1 > x_{0}$ there is nothing to check, and so we deduce that, for all $n\geq n_{0}$, we have $(q, n) \in Q$ for all $q$. The best choice here is $n_{0} = 25$, which shows that (\ref{cheese}) is true for $x\geq x_{0} \approx 2.5\cdot 10^{7}$, whereas $2^{25} \approx 3.3\cdot 10^{7}$. Hence we instantly prove that for all $n\geq 25$ we have $(q, n) \in Q$ for all prime powers $q$.

Now we consider $n\leq 24$. For each $n\leq 24$ we let $\omega(q^{n} -1) = j$, whence $q_{n} -1 \geq p_{1} \cdots p_{j}$, where $p_{i}$ denotes the $i$th prime. This then gives a lower bound on $q$, namely,
\begin{equation}\label{milk}
q\geq (p_{1} \cdots p_{j} + 1)^{1/n},
\end{equation}
which, combined with (\ref{Nbound}) shows that $(q, n) \in Q$ if
\begin{equation}\label{butter}
(p_{1} \cdots p_{j} + 1)^{1/2 - 2/n} > 2^{2+j}.
\end{equation}
We aim at using (\ref{butter}) for each $5\leq n \leq 24$ to show that $j$ cannot be too large. For example, when $n=24$ we have that (\ref{butter}) is true for all $j\geq 9$, hence, we only need to consider $q$ such that $\omega(q^{n} -1) \leq 8$. At worst, in (\ref{Nbound}) we have $\omega(q^{n} -1) = 8$, and with $n=24$ this means that we need only check $q< 3.53$. This means that the only prime powers at require checking at $q=2,3$. For each of these we check the exact factorisation of $\omega(q^{n} -1)$, and in each case, we find that there is nothing to check. Hence we have eliminated $n=24$. 

We continue in this way and eliminate all $n\geq 17$. We find, though, that we are unable to eliminate $(q, n) = (2, 16)$. For $n\leq 16$ we require the sieve from Theorem \ref{sieveineq}.

Choosing, for example, $s=2$ eliminates the need for further examination of $(q, n) = 16$, whence we need only consider those $n\leq 15$. We continue like this for all $n\geq 6$ and produce the following table of possible exceptions.

\begin{table}[tbh]
\centering
\caption{Possible exceptions to $(q, n) \in Q$.}\label{Tab1}
\begin{tabular}{|cc|}\hline
$n$ & List of prime powers $q$\\\hline
 12 & $2$  \\
    11 & $2$ \\
      10 & $2$  \\
      9 & $2,3,4$  \\
        8 & $2, 3, 4, 5, 7, 8$  \\
          7 & $2, 3, 4, 5, 7, 11$ \\
            6 & $S_{6}$  \\\hline
\end{tabular}
\end{table}

We note that $S_{6}$ is the following set of 49 elements: 
\begin{equation}\label{cream}
\begin{split}
S_{6} =  &\{2, 3, 4, 5, 7, 8, 9, 11, 13, 16, 17, 19, 23, 25, 27, 29, 31, 32, 37, 
41, 43, 47, 49, 53, 59, 61,64, 67,\\  & 71, 73, 79, 81, 83, 89, 101, 103, 
107, 109, 113, 121, 131, 137, 139, 149, 169, 179, 181,191, 211\}.
\end{split}
\end{equation}

\section{The case $n=5$ and the hybrid bound}\label{sec5}
The case $n=5$ provides some additional difficulties. We can, through judicious choice of $s$, win for all $\omega(q^{5} -1) \geq 24$. When $\omega(q^{5} -1)= 23$, the best choice is $s=3$, which shows that we need to consider prime powers $q< 6.2\cdot 10^{6}$. While it is possible to enumerate these prime powers, and then to check them against (\ref{sievebound}), we provide an alternative approach, which helps with the problem considered \S \ref{misc}, and which may be of use for similar problems.

Throughout the use of the sieve we have utilised a lower bound on $q$ in
(\ref{milk}). We strengthen this with the following observation. Note
that if $ p\mid  q^{5} -1$ and $ p  \not\equiv 1 \pmod {10}$, then
$p\mid  q-1$. Define $r_{i}$ as the $i$th prime congruent to $1\pmod{10}$ and $s_{i}$ as the $i$th prime not congruent to $1\pmod {10}$. For $\omega(q^{5} -1) = j$ suppose there are exactly $m$ primes $\equiv 1\pmod{10}$ dividing $q^{5}-1$, with $0\leq m\leq j$. We therefore have that each of these $m$ primes divide $q-1$, whence 
\begin{equation}\label{custard1}
q-1 \geq r_{1} \cdots r_{m},
\end{equation}
and also
\begin{equation}\label{custard2}
q^{5} -1  \geq \left(r_{1} \cdots r_{m}\right) \left( s_{1} \cdots s_{j-m}\right).
\end{equation}
Rearranging (\ref{custard2}) gives a lower bound on $q$, which, when combined with (\ref{custard1}) gives us a hybrid lower bound for $q$. Taking the minimum over all possible values of $m$ gives us the following
\begin{equation}\label{curd}
q\geq \min_{0\leq m\leq j} \max\left\{ r_{1} \cdots r_{m} +1, \left(r_{1} \cdots r_{m} s_{1} \cdots s_{j-m} +1\right)^{1/5}\right\}.
\end{equation}
To show the utility of this bound, consider $\omega(q^{5} - 1) = j = 23$. From (\ref{milk}) we have that $q> 3.0\cdot 10^{6}$, but, from (\ref{curd}) we have $q> 3.8\cdot 10^{7}$, coming from the worst case of $m=8$. The hybrid lower bound in (\ref{curd}) allows us to eliminate $j=23, 22, 21$. Therefore we need only consider $j\leq 20$ which, upon choosing $s=17$, means looking only at those $q< 3.4\cdot 10^{6}$.

We now consider $q$ even: since $q< 3.4\cdot 10^{6}$
we have that $q=2^{\alpha}$ with $\alpha\leq 21$. For each of these values we can use exact factorisation, which shows that the only even $q$ that require further testing are 
\begin{equation*}\label{sugar}
\{2, 4, 8, 16, 32, 64, 128, 256, 512, 1024, 4096\}.
\end{equation*}
These can be tested individually using the exact prime factorisation of
$\omega(q^{5} -1)$. Henceforth we shall only be concerned with $q$ odd.

\subsection{Divisor tree}\label{sec:branch}
We cut down on the number of possible exceptions further by checking
only a thin subset of prime powers. This is a similar idea to that
explored by several authors \cite{MTT}, \cite{Jarso}, \cite{Jarso2}.
Note that, if small primes do not divide $q^{5}-1$, not only does the
lower bound for $q^{5} -1$ increase via (\ref{curd}), but the value of
$\delta$ increases, which makes our sieve criterion (\ref{sievebound})
easier to solve. For example, if 3 does not divide $q^{5} -1$ then we
have $q> 5.3\cdot 10^{6}$ whereas the sieve criterion (\ref{sievebound})
is satisfied for $q> 4.9\cdot 10^{5}$. Therefore, there is nothing to
check in this case, whence we deduce that $3\mid q^{5} -1$. 

We continue in this way, showing that also $5, 7$ and $13$ divide $q^{5} -1$ --- the prime $2$ divides as well, of course, since $q$ is odd. We may therefore write 
\begin{equation*}\label{yoghurt}
q = 2730M +1, \quad q< 3.4\cdot 10^{6} \Longrightarrow M\leq 1245.
\end{equation*}
We now enumerate these 1245 values of $q$, retaining only those $q$ that are prime powers, as well as those $q$ for which $\omega(q^{5} -1) = 20$. We find no such $q$, and hence, we can eliminate $j=20$. We continue in this way, eliminating all $15\leq j \leq 19$.

When $j=14$ we find exactly one prime power $q = 652,831$ which meets the requirement that $\omega(q^{5} -1) = 14$. However, we can use exact factorisation and take any $10\leq s \leq 13$ to eliminate this value of $q$. We similarly eliminate $j=13$.

When $j=12$ we find our first `genuine survivor': the entire arsenal of theoretical machinery fails to show that $q= 27691$ qualifies for membership of $Q$. We continue in this way and arrive at 816 possible values of $q$ to check, the largest of which is $q=63211$. 
Rather than give this list, we proceed to use the modified prime sieve, Theorem \ref{MPS}, as in \cite{Bailey}, \cite{Booker}, to remove some of these exceptions. Doing this removes 52 elements from our list of 816 possible exceptions when $n=5$.

We note that we can use Theorem \ref{MPS} to remove the case $(n, q) =
(8, 7)$, by taking $s=1$, $r=1$. Similarly, we can remove $q=113$, $169$,
and $191$ from our list of possible exceptions for $n=6$ in (\ref{cream}).


\section{Computation}\label{comp}
We provide details of the computation used to show the remaining pairs $(q, n) \in Q$. Recall that we need to cover all possible pairs $(a, b)$ in the ground field $\mathbb{F}_{q}$ and show we can find $\xi$ such that $\xi + \xi^{-1}$ is primitive with $T(\xi) = a$ and $T(\xi^{-1}) = b$. 

Our basic procedure is described in pseudocode in Algorithm~\ref{Ag1}.
We note a few features:
\begin{itemize}
\item
As written the algorithm cannot detect failure---if no suitable $\xi$
exists then it will enter an infinite loop. To circumvent this, one
could count the number of random trials tried for a given pair $(a,b)$,
and perform an exhaustive search over all choices of the coefficients
$c_k$ when the count exceeds $q^{n-2}$, say. In any case, we only
encountered one true exception, $(q,n)=(2,6)$, which is easily seen to
be exceptional for the pairs $(a,b)=(0,0)$ and $(1,1)$ by exhausting
over all $\xi\in\mathbb{F}_{2^6}$.
\item
The probability that the polynomial $P$ produced in step~\ref{randstep}
is irreducible is about $1/n$, and the probability that $\gamma$ defined
in step~\ref{gamma} is primitive is then about $\varphi(q^n-1)/(q^n-1)$,
so we expect to succeed in about $n(q^n-1)/\varphi(q^n-1)$ trials
on average. This is largest for $(q,n)=(121,6)$, where it equals
$31.39\ldots$.
\item
We can stop the inner loop at $j=i$ thanks to the symmetry
between $a$ and $b$ (swapping them is equivalent to swapping $\xi$ and
$\xi^{-1}$). A further reduction is possible when $q\equiv1\pmod4$,
since negating a solution for $(a,b)$ yields one for $(-a,-b)$, but we did
not make use of this in our implementation.
\end{itemize}

\begin{algorithm}[htbp]
\label{Ag1}
\SetAlgoNoLine
\DontPrintSemicolon
Input: $q$ and $n$.

Output: success if $(q,n) \in Q$; no output otherwise.

Construct the finite field $\F$ and generate a random primitive element,
denoted $g$.

Generate a list of all elements $\alpha_i\in\F$, where
$\alpha_0=0$ and $\alpha_i=g^i$ for $1\le i\le q-1$.

\For{\upshape $i$ from $0$ to $q-1$}{
	\For{\upshape $j$ from $0$ to $i$}{
		Set $a=\alpha_i$, $b=\alpha_j$.

		Choose random $c_0\in\F^*$, $c_2,\ldots,c_{n-2}\in\F$
		and set $P=x^n-ax^{n-1}+\sum_{k=2}^{n-2}c_kx^k-c_0bx+c_0$.
		\label{randstep}

		\lIf{\upshape $P$ is reducible}{go to step \ref{randstep}.}

		Set $\xi=x+(P)\in\F[x]/(P)$ and
		$\gamma=\xi+\xi^{-1}=b-c_0^{-1}\bigl((c_2-c_0)x+\sum_{i=2}^{n-3}c_{k+1}x^k
		-ax^{n-2}+x^{n-1}\bigr) + (P)$.
		\label{gamma}

		\For{\upshape each prime $p\mid q^n-1$}{
			\lIf{$\gamma^{(q^n-1)/p}=1$}{go to step \ref{randstep}.}
		}
  }
}

\Return{success.}

\caption{Verify that $(q,n) \in Q$}
\end{algorithm}

\subsection{Potential exceptions for direct verification}
For $n=5$, there are $764$ values of $q$ that our sieves fail to
eliminate, including all $q\le1181$; the largest is the prime
$62791$. See Table~\ref{Tab1} for candidate exceptions when $6\le n \le 12$.

\subsection{Computational results}\label{wallet}
We coded two versions of Algorithm~\ref{Ag1}. The first, in
\texttt{PARI/GP} \cite{PARI}, was used to handle all pairs with $n\ge6$,
as well as those with $n=5$ and $q\le7$. The second, faster version
was coded in \texttt{C} using the \texttt{FLINT} library \cite{FLINT}
for arithmetic in $\F[x]$. Specifically we used \texttt{FLINT}'s routines
based on Zech logarithms, which are very fast when $q$ is small.  (In this
context even our largest value $q=62791$ counts as ``small''.) For this
version we restricted to $n=5$ and $c_0=-1$ in order to simplify the
coding, so the $\xi$ that it produces always have norm $1$; although it
is not necessary to impose this restriction, doing so causes no harm,
as it seems to be sufficient for all $q\ge8$.

The total running time to check all candidates was approximately 6 days
using 192 cores (AMD Opteron processors running at 2.5~GHz).

\section{Another application of the hybrid bound}\label{misc}

Let us say that $q\in \mathfrak{P}$ if, for any $a\in \mathbb{F}_{q}$ we
can find a primitive element $\xi\in \mathbb{F}_{q^{3}}$ such that $\xi +
\xi^{-1}$ is also primitive and  $T(\xi) = a$. The analogous
problem with $\xi\in \mathbb{F}_{q^{n}}$ was considered by Gupta, Sharma,
and Cohen \cite{Sharma} who proved a complete result for $n\geq 5$. Cohen
and Gupta \cite{Gupta} extended this work, providing a complete result for
$n=4$, and, for $n=3$, showing that $q\in \mathfrak{P}$ if $\omega(q^{3}
-1) \geq 27$ or $q\ge 3\cdot10^{13}$. They did this with the following
--- see \cite[Thm.\ 3.2]{Gupta}.

\begin{thm}[Cohen and Gupta]\label{oval}
Let $q$ be a prime power and write $q^{3} -1 = kP$, where $(k, P) = 1$ and $\omega(P) = s\geq 1$. Define
\begin{equation*}\label{trumpet}
\delta  = 1 - 2\sum_{i=1}^{s} \frac{1}{p_{i}},
\end{equation*}
where $\rad(P) = p_{1} \cdots p_{s}$ for distinct primes $p_{i}$. Let $C_{q} = 2$ if $q$ is even and $C_{q} = 3$ if $q$ is odd. If $\delta>0$ and
\begin{equation}\label{trombone}
q^{1/2} > C_{q} 2^{2\omega(k)} \left( 2 + \frac{2s-1}{\delta}\right),
\end{equation}
then $q\in \mathfrak{P}$.
\end{thm}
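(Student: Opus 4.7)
The plan is to adapt the character-sum and sieve machinery of Sections \ref{prelim} and \ref{cheek} to the present two-primitivity/one-trace setting. For divisors $e_1,e_2$ of $q^3-1$ and $a\in\mathbb{F}_q$, define
\[
 M(e_1,e_2) \;=\; \sum_{\xi} P_{e_1}(\xi)\,P_{e_2}(\xi+\xi^{-1})\,L_a(\xi),
\]
where the sum runs over $\xi\in\mathbb{F}_{q^3}^*$ with $\xi+\xi^{-1}\ne 0$, $P_e$ is the $e$-free indicator of \S\ref{prelim}, and $L_a(\xi)=\frac{1}{q}\sum_{u\in\mathbb{F}_q}\lambda(-ua)\hat{\lambda}(u\xi)$ detects the trace condition $T(\xi)=a$. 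Then $q\in\mathfrak{P}$ is equivalent to $M(q^3-1,q^3-1)>0$ for every $a\in\mathbb{F}_q$.

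Expanding both $P_{e_i}$ in multiplicative characters produces a sum over $(d_1,d_2,\psi_{d_1},\psi_{d_2},u)$. The main contribution comes from $d_1=d_2=1$ with $u=0$, and equals $\theta(e_1)\theta(e_2)(q^3-1-\epsilon)/q \sim \theta(e_1)\theta(e_2)q^2$, where $\epsilon$ counts the excluded $\xi$ with $\xi^2=-1$ (lying in $\{0,1\}$ or $\{0,2\}$ according to the parity of $q$, which is the source of the parity-dependent constant $C_q$ in (\ref{trombone})). Every other term produces a mixed character sum $\sum_\xi \psi_{d_1}(\xi)\,\psi_{d_2}(\xi+\xi^{-1})\,\hat{\lambda}(u\xi)$; after the algebraic rearrangement $\psi_{d_2}(\xi+\xi^{-1})=\psi_{d_2}(\xi^{-1})\psi_{d_2}(\xi^2+1)$ these fit Lemma \ref{charest} applied with $q^3$ in place of $q$, and so admit a uniform bound of shape $c\,q^{3/2}$ for a small absolute constant $c$. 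Summing the Weil-type estimates over the $2^{\omega(e_1)+\omega(e_2)}$ squarefree divisor pairs and the $q$ additive characters yields
\[
 M(e_1,e_2)\;\geq\;\theta(e_1)\theta(e_2)\,q^{3/2}\bigl(q^{1/2}\,-\,C_q\cdot 2^{\omega(e_1)+\omega(e_2)}\bigr).
\]

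Finally, take $e_1=e_2=k\,\rad(P)$ and apply Lemma \ref{sieve} to the primes of $P$, exactly as in the passage from Theorem \ref{Nebound} to Theorem \ref{sieveineq}. Because each sieving prime must be stripped from both primitivity conditions simultaneously, the sieve incurs roughly twice the cost of the one-primitivity version of \S\ref{cheek}: the bracketed factor that emerges is $2+(2s-1)/\delta$ rather than $(s-1)/\delta+2$, which is precisely the form appearing in (\ref{trombone}). The main technical obstacle is the careful bookkeeping---pinning down the constant in the Weil-type bound, isolating the exceptional locus $\xi^2=-1$ (the origin of the factor $C_q$), and verifying that the $u=0$ contributions with $(d_1,d_2)\neq(1,1)$ are dominated by the error---but no genuinely new analytic idea is required beyond the Cohen-style template already deployed in Sections \ref{prelim}--\ref{cheek} and in \cite{Co,CW}.
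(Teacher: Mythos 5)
You should first note that the paper does not prove this statement at all: Theorem~\ref{oval} is imported verbatim from Cohen and Gupta \cite[Thm.~3.2]{Gupta} and used as a black box in \S\ref{misc}, so there is no internal proof to compare against. That said, your sketch is a faithful reconstruction of the argument Cohen and Gupta actually give, and it is the natural extension of the template in \S\ref{prelim}--\S\ref{cheek}: two freeness indicators $P_{e_1}(\xi)P_{e_2}(\xi+\xi^{-1})$, one trace detector $L_a$, a main term $\theta(e_1)\theta(e_2)q^2$, Weil bounds of size $O(q^{3/2})$ for the cross terms via Lemma~\ref{charest} applied over $\mathbb{F}_{q^3}$, and a two-slot sieve in which each prime is removed from both freeness conditions, correctly accounting for $\delta=1-2\sum 1/p_i$ and the factor $2+(2s-1)/\delta$. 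The one point you get wrong is the provenance of $C_q$: the excluded locus $\{\xi:\xi^2=-1\}$ only perturbs the main term by $O(1)$ and cannot generate a multiplicative constant in the error bound. The parity dependence actually enters through the zero/pole count $l$ in Lemma~\ref{charest} for $g(x)=x+x^{-1}=(x^2+1)/x$: in odd characteristic $x^2+1$ has two distinct roots, while in characteristic $2$ it equals $(x+1)^2$ and has only one, so the Weil constant $(l_0+l+l_1-l_2-2)$ is smaller by one when $q$ is even, giving $C_q=2$ versus $C_q=3$. With that correction, and the usual caveat that the constant bookkeeping for each class of triples $(d_1,d_2,u)$ must be carried out explicitly to land exactly on \eqref{trombone}, your outline matches the published proof.
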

While Cohen and Gupta consider also a modified prime sieve, analogous to our Theorem \ref{MPS}, we do not include this here. Instead, we proceed to use (\ref{trombone}) for $\omega(q^{3} -1) \leq 26$.

Proceeding directly, we first dispense with $q$ even,
for which we need only consider $q=2^{\alpha}$ for $\alpha
\leq\lfloor\log_2(3\cdot10^{13})\rfloor=44$.  When we factor $q^{3} -1$
explicitly and apply the sieve, the only values of $q$ that survive and
need further testing are
\begin{equation*}\label{dollop}
q = 2^{\alpha} \quad\text{for }\alpha \in \{1 ,2, 3, 4, 6, 8, 10, 12, 20\}.
\end{equation*}

We now focus on $q$ odd. We improve on the lower bound of $q>3\cdot
10^{13}$ with a version of the hybrid lower bound (\ref{curd}). We make
two observations that help with the lower bound. First, that if
$p\mid q^{3}-1$, where $p=3$ or $p\equiv 2\pmod3$ then $p\mid q-1$. Second, that
if we know that $3\mid q^{3} -1$ then we must also have that
$3^{2}\mid q^{3}-1$. We use the first of these to establish an analogy with (\ref{curd}),
namely that if $\omega(q^{3}-1) = j$, then
\begin{equation}\label{whey}
q\geq \min_{0\leq m\leq j} \max\left\{ t_{1} \cdots t_{m} +1, \left(t_{1} \cdots t_{m} u_{1} \cdots u_{j-m} +1\right)^{1/3}\right\},
\end{equation}
where $t_i$ denotes the $i$th prime congruent to $0$ or $2\pmod3$
and $u_i$ denotes the $i$th prime congruent to $1\pmod3$. Similarly,
if $3\mid q^{3} -1$ then the second term in (\ref{whey}) is multiplied
by $3^{1/3}$.

This eliminates $\omega(q^{3} -1) = 26$ or $25$ without any
trouble. When $\omega(q^{3} -1) = 24$ we find that we need to check those
$q\in (2.87\cdot 10^{11}, 1.77\cdot 10^{13})$. With the hybrid bound
this translates to checking only $m=7$, $8$, $9$, $10$. We easily dispense with
$m=7$, $8$. With $m=9$, $10$ we need to examine whether some small primes do,
or do not divide $q^{3}-1$. While this works for $\omega(q^{3} -1) = 24$,
the number of cases to check explodes when we examine $\omega(q^{3}-1)
= 23$. Hence we conclude that we win for all $\omega(q^{3} -1)\geq
24$. When $\omega(q^{3} -1) \leq 23$ we win provided that $q\geq 8\cdot
10^{12}$. This gives the following result, an improvement on Theorem
9.1 of  \cite{Gupta}.

\begin{thm}
We have that $q\in \mathfrak{P}$ if $\omega(q^{3} -1) \geq 24$ or if $q\geq 8\cdot 10^{12}$.
\end{thm}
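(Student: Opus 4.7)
I split the theorem into its two clauses and attack each one using Theorem~\ref{oval}, the hybrid lower bound in the style of (\ref{whey}), and a short finite computation.

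Start with the clause $q \geq 8 \cdot 10^{12}$. Here I may assume $\omega(q^{3}-1) \leq 23$, since otherwise the other clause already applies. For each pair $(\omega(k),s)$ with $\omega(k)+s=\omega(q^{3}-1)\le 23$, I pick the $s$ sieving primes to be as small as possible so that $\delta$ stays safely positive and $\omega(k)$ is small. Substituting $q\ge 8\cdot 10^{12}$ into the left-hand side of (\ref{trombone}) and the worst-case $(\omega(k),s)$ pattern into the right, one verifies the inequality directly; since there are only finitely many such patterns to try, this step is a short numerical check.

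For the clause $\omega(q^{3}-1) \geq 24$, the goal is to avoid any assumption on the size of $q$. For $q$ even, the Cohen--Gupta bound $q < 3\cdot 10^{13}$ leaves only $q = 2^{\alpha}$ with $\alpha \leq 44$. For each such $\alpha$ I compute the exact factorisation of $q^{3}-1$ and apply (\ref{trombone}) with a near-optimal choice of sieving primes; the short surviving list can then be dispatched by direct search along the lines of Algorithm~\ref{Ag1}. For $q$ odd I use the hybrid bound (\ref{whey}), which combines the inclusion ``$p\mid q^{3}-1$ and $p\equiv 0,2\pmod 3$ $\Rightarrow$ $p\mid q-1$'' with the refinement that $3\mid q^{3}-1$ forces $9\mid q^{3}-1$. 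For each $j=\omega(q^{3}-1)\ge 24$ I minimise (\ref{whey}) over $m\in\{0,1,\ldots,j\}$ to get a lower bound on $q$, and check that this bound makes (\ref{trombone}) hold.

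The real obstacle is $j=24$: the hybrid bound only traps $q$ in the window $(2.87\cdot 10^{11},\,1.77\cdot 10^{13})$, and four candidate values $m\in\{7,8,9,10\}$ survive. The maximisation in (\ref{whey}) disposes of $m=7,8$ cleanly; for $m=9,10$ one must branch on which small primes (e.g.\ $3,7,13$) actually divide $q^{3}-1$, in the spirit of the divisor-tree argument of \S\ref{sec:branch}. Each branch either strengthens (\ref{whey}) enough to beat (\ref{trombone}) or funnels $q$ into a manageable finite list to be tested directly. Once $j=24$ is cleared, $j=25,26$ are immediate since the first term in the hybrid max already dominates, so combining both clauses yields the theorem.
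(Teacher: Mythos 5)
Your proposal follows essentially the same route as the paper: invoke Theorem~\ref{oval}, handle even $q$ by exact factorisation of $2^{3\alpha}-1$ for $\alpha\le 44$, handle odd $q$ via the hybrid bound (\ref{whey}) with the $9\mid q^3-1$ refinement, dispose of $j=25,26$ immediately, and for $j=24$ reduce to $m\in\{7,8,9,10\}$ with a divisor-tree branch on small primes for $m=9,10$, while the clause $\omega(q^3-1)\le 23$, $q\ge 8\cdot10^{12}$ is a direct check against (\ref{trombone}). The only cosmetic difference is that you propose dispatching the surviving even prime powers by direct search, which is unnecessary for the theorem as stated since those survivors satisfy neither hypothesis.
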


Further considerations  involving, for example, the distribution of primes that are congruent to 1 modulo 6 as factors of $q-1$ and $q^2+q+1$ can be undertaken. Inevitably, however, the complete resolution of this question will involve substantial computation that would be inappropriate in the present article.

\subsection*{Acknowledgements}
We are grateful to David Harvey for discussions on the problem in \S \ref{misc}.


\begin{thebibliography}{1}

\bibitem{Bailey}
G. Bailey, S. D. Cohen, N. Sutherland, and T. Trudgian.
\newblock Existence results for primitive elements in cubic and quartic extensions of a finite field.
\newblock {\em Math. Comp.}, 88(316):931--947, 2019.

\bibitem{Booker}
A. R. Booker, S. D. Cohen, N. Sutherland, and T. Trudgian.
\newblock Primitive values of quadratic polynomials in a finite field.
\newblock {\em Math. Comp.}, 88(318):1903--1912, 2019.

\bibitem{CW}
X.  Cao and P. Wang.
\newblock Primitive elements with prescribed trace.
\newblock {\em Appl. Algebra Engrg. Comm. Comput.},  25:339--345, 2014.

\bibitem{CM}
F. Castro and C. Moreno.
\newblock  Mixed exponential sums over finite fields.
\newblock {\em Proc. Amer. Math. Soc.}, 128:2529--2537, 2000.

\bibitem{CP}
T.~Cochrane and C.~Pinner.
\newblock Using Stepanov's method for exponential sums involving rational functions.
\newblock {\em J. Number Theory}, 116:270--292, 2006.





\bibitem{Co}
S.~D. Cohen.
\newblock Kloosterman sums and primitive elements in finite fields.
\newblock {\em Acta Arith.}, 94:173--201, 2000.





\bibitem{Gupta}
S. D. Cohen and A. Gupta.
\newblock Primitive element pairs with a prescribed trace in the quartic extension of a finite field.
\newblock {\em J. Algebra Appl.},  2150168:14pp, 2021.


\bibitem{CH}
S.~D. Cohen and S. ~Huczynska.
\newblock The primitive normal basis theorem -- without a computer.
\newblock {\em J. London Math. Soc. (2)}, 67(1):41--56, 2003.

\bibitem{COST}
S.~D. Cohen, T. Oliveira e Silva, N. ~Sutherland, and T. ~Trudgian.
\newblock Linear combinations of primitive elements of a finite field.
\newblock {\em Finite Fields Appl.}, 51:388--406, 2018.

\bibitem{CPre}
S. D. Cohen and  M. Pre\v{s}ern.
\newblock Primitive finite field elements with prescribed trace.
\newblock {\em Southeast Asian Math.
Bull.}, 29:1--18, 2005.


\bibitem{FLINT}
W.~Hart, F.~Johansson, and S.~Pancratz, \emph{{FLINT}: {F}ast {L}ibrary for
  {N}umber {T}heory}, 2021, Version 2.8.4, \url{http://flintlib.org}.

\bibitem{GGP}
S. ~Gao, J. von zur Gathen, and D.~ Panario.
\newblock Gauss periods: orders and cryptographical applications.
\newblock {\em Math. Comp.}, 67(221):343--352, 1998.

\bibitem{Sharma}
A. Gupta, R. K. Sharma, and S. D. Cohen.
\newblock Primitive element pairs with one prescribed trace over a finite field.
\newblock {\em Finite Fields Appl.}, 54:1--14, 2018.




\bibitem{Jarso}
T. Jarso and T. Trudgian.
\newblock Quadratic non-residues that are not primitive roots.
\newblock {\em Math. Comp.}, 88(317):1251--1260, 2019.

\bibitem{Jarso2}
T. Jarso and T. Trudgian.
\newblock Four consecutive primitive elements in a finite field.
\newblock To appear in {\em Math. Comp.} Preprint available at
\href{https://arxiv.org/abs/2109.11691}{arXiv:2109.11691}.

\bibitem{LS}
H. W. ~Lenstra and R.  J.  ~Schoof.
\newblock Primitive normal bases for finite fields.
\newblock {\em Math. Comp.}, 48:217--231, 2006.


\bibitem{MTT}
K.~J. McGown, E.~Trevi\~{n}o, and T.~Trudgian.
\newblock Resolving {G}rosswald's conjecture on {GRH}.
\newblock {\em Funct. Approx. Comment. Math.}, 55(2):215--225, 2016.

\bibitem{PARI}
{The PARI~Group}, Univ. Bordeaux, \emph{{PARI/GP version \texttt{2.13.3}}},
  2021, available from \url{http://pari.math.u-bordeaux.fr/}.

\bibitem{Robin}
G.~Robin.
\newblock Estimation de la fonction de {T}chebychef $\theta$ sur le
  $k$-i\`{e}me nombre premier et grandes valeurs de la fonction $\omega(n)$
  nombre de diviseurs premiers de $n$.
\newblock {\em Acta Arith.}, 42(4):367--389, 1983.

\bibitem{SharmaRS}
A. K. Sharma, M. Rani, and S. K. Tiwari.
\newblock Primitive normal pairs with prescribed norm and trace.
\newblock {\em Finite Fields Appl.}, 78:101976, 2022.


\end{thebibliography}
\end{document}